\newtheorem{thm}{Theorem}[section]
\newtheorem{prob}[thm]{Problem}
\theoremstyle{definition}			                			
\newtheorem{ex}[thm]{Example}
\numberwithin{equation}{section}		            			
\newcommand{\sr}[1]{\rho\left(#1\right)}						
\newcommand{\sig}[1]{\sigma \left( #1 \right)}				    
\newcommand{\dg}[1]{\Gamma \left( #1 \right)}					
\begin{document}
\articletype{Short Note{\hfill}Open Access}

\author*[1]{Pietro Paparella}
\affil[1]{University of Washington Bothell, E-mail: pietrop@uw.edu}

\title{\huge Spectrally Perron Polynomials and the Cauchy-Ostrovosky Theorem}
\runningtitle{Spectrally Perron Polynomials}


\begin{abstract}
{In this note, we simplify the statements of theorems attributed to Cauchy and Ostrovsky and give proofs of each theorem via combinatorial and nonnegative matrix theory. We also show that each simple sufficient condition in each statement is also necessary in its respective case. In addition, we introduce the notion of a \emph{spectrally Perron polynomial} and pose a problem that appeals to a wide mathematical audience.}
\end{abstract}
  
\keywords{spectrally Perron polynomial, Cauchy Theorem, Ostrovsky Theorem, primitive matrix}

\linenumbers
  \journalname{Spec. Matrices}
\DOI{DOI}
  \startpage{1}
  \received{...}
  \revised{...}
  \accepted{...}

  \journalyear{...}
  \journalvolume{...}
\maketitle

\section{Introduction}

Because the roots of a polynomial coincide with the eigenvalues of its \emph{companion matrix}, many classical results on the geometry of polynomials can be obtained with relative ease via matricial methods: for instance, Wilf \cite{w1961} used the Perron-Frobenius theroem to derive \emph{Cauchy's bound} on the maximum modulus of any given root and Bell \cite{b1965} used the Gershgorin Theorem to derive the same bound along with other bounds and a result due to Walsh (see \cite[\S 5.6]{hj2013} for a bounds obtained via matrix norms). 

The purpose of this work is to give proofs of results attributed to Cauchy and Ostrovsky via combinatorial matrix theory and nonnegative matrix theory. In the process of doing so, we simplify the statement of each of these results by giving a single, simple sufficient condition that is shown to be necessary (respectively) in each theorem. In addition, we introduce the notion of a \emph{spectrally Perron polynomial} and pose a problem that appeals to a wide mathematical audience.

\section{Notation \& Background}
 
A real matrix is called \emph{nonnegative} (\emph{positive}) if it is entrywise nonnegative (respectively, positive) matrix. 

A \emph{directed graph} (or simply \emph{digraph}) $\Gamma = (V,E)$ consists of a finite, nonempty set $V$ of \emph{vertices}, together with a set $E \subseteq V \times V$ of \emph{arcs}. For an $n$-by-$n$ matrix $A$, the \emph{digraph} of $A$, denoted by $\Gamma = \dg{A}$, has vertex set $V = \{ 1, \dots, n \}$ and arc set $E = \{ (i, j) \in V \times V : a_{ij} \neq 0\}$. 

A digraph $\Gamma$ is called \emph{strongly connected} if for any two distinct vertices $i$ and $j$ of $\Gamma$, there is a walk in $\Gamma$ from $i$ to $j$ (following \cite{br1991}, we consider every vertex of $V$ as strongly connected to itself). A strong digraph is \emph{primitive} if the greatest common divisor of all its cycle-lengths is one, otherwise it is \emph{imprimitive}. 

For $n \geq 2$, an $n$-by-$n$ matrix $A$  is called \emph{reducible} if there exists a permutation matrix $P$ such that
\begin{align*}
P^\top A P =
\begin{bmatrix}
A_{11} & A_{12} \\
0 & A_{22}
\end{bmatrix},
\end{align*}
where $A_{11}$ and $A_{22}$ are nonempty square matrices. If $A$ is not reducible, then A is called \emph{irreducible}. It is well-known that a matrix $A$ is irreducible if and only if $\dg{A}$ is strongly connected (see, e.g., \cite[Theorem 3.2.1]{br1991} or \cite[Theorem 6.2.24]{hj2013}). 

Borrowing terminology from \cite{lm1995}, we call a real matrix $A$ \emph{spectrally Perron} if there is a simple eigenvalue $\rho > 0$ such that 
\begin{equation}
\rho > |\lambda|,~\forall \lambda \in \sig{A}. \label{strict}
\end{equation}
If $\rho$ is a simple positive eigenvalue and the inequality \eqref{strict} is not strict, then we call $A$ \emph{weakly spectrally Perron}.

An irreducible nonnegative matrix is called \emph{primitive} if its digraph is primitive; otherwise it is \emph{imprimitive}. If $A \geq 0$, then $A$ is primitive if and only if some positive integral power of $A$ is positive \cite[Theorem 3.4.4]{br1991}. Thus, $A$ is spectrally Perron if $A$ is primitive \cite{h1981, jt2004,n2006}. 

Given an $n$-by-$n$ matrix $A$, the \emph{characteristic polynomial of $A$}, denoted by $\chi_A$, is defined by $\chi_A = \det{(tI - A)}$. The \emph{companion matrix} $C = C_p$ of a monic polynomial $p(t) = t^n + \sum_{k=1}^{n} c_{k} t^{n - k}$ is the $n$-by-$n$ matrix defined by
\[ C = 
\left[\begin{array}{rr}
0 & I \\
-c_n & -c
\end{array} \right], \]
where $c = [c_{n-1}~\cdots~c_1]$. It is well-known that $\chi_C = p$. Notice that $C$ is irreducible if and only if $c_n \neq 0$. 

Finally, we call a polynomial $p$ \emph{(weakly) spectrally Perron} if its companion matrix is (weakly) spectrally Perron.

\section{The Cauchy-Ostrovosky Theorem}

Before we state the proofs, we introduce some brief notation. In this section, $p(t) = t^n - c_1 t^{n - 1} - \cdots -  c_n$, where $c_k \geq 0$, for every $k$. For $k = 1, \dots, n$, let 
\begin{equation*}
i_k :=
\left\{
\begin{array}{r l}
k, & c_k \neq 0 \\
0, & c_k = 0.
\end{array}
\right.
\end{equation*}
Finally, let $d := \gcd{\left(i_1,\dots,i_n \right)}$, $\mathcal{I} := \{ i_1, \dots, i_n \}$, and $\mathcal{C} := \{ c_1,\dots, c_n \}$.

The following result (c.f., \cite[Theorem 1.1.3]{p2010}) is attributed to Cauchy and a proof-sketch via nonnegative matrix theory is given in \cite[Theorem 1]{w1961}; we give a proof via nonnegative matrix theory and show that the sufficient condition is also necessary.

\begin{thm}[Cauchy]
\label{thm:cauchy}
The polynomial $p$ is weakly spectrally Perron if and only if $d \neq 0$. 
\end{thm}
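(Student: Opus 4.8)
My plan is to reduce everything to the nonnegative companion matrix $C := C_p$ of $p$ and to the Perron--Frobenius theory of irreducible nonnegative matrices. Since $p(t) = t^n - c_1 t^{n-1} - \cdots - c_n$ with each $c_k \ge 0$, the last row of $C$ is $[c_n~c_{n-1}~\cdots~c_1]$, so $C \ge 0$. I would first dispose of necessity by contraposition: $d \ne 0$ holds precisely when some $c_k \ne 0$ --- if all $c_k = 0$ then all $i_k = 0$ and $d = \gcd(0,\dots,0) = 0$, whereas a single nonzero $c_k$ puts $i_k = k \ge 1$ into $\mathcal{I}$ and forces $d \ge 1$. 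Hence if $d = 0$ then $p(t) = t^n$, so $\chi_C = t^n$ and $\sig{C} = \{0\}$; since $C$ has no positive eigenvalue whatsoever, $p$ is not weakly spectrally Perron.

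For sufficiency I would assume $d \ne 0$, set $m := \max\{k : c_k \ne 0\}$, and exploit that $c_k = 0$ for all $k > m$: writing $q(t) := t^m - c_1 t^{m-1} - \cdots - c_m$ we have $p(t) = t^{n-m}q(t)$, so, since $\chi_C = p$, the eigenvalues of $C$ are exactly the roots of $q$ together with $0$ repeated $n-m$ times. Now $C_q$ is again nonnegative, and since its trailing coefficient $-c_m$ is nonzero, $C_q$ is irreducible. By the Perron--Frobenius theorem (see, e.g., \cite{br1991}), the spectral radius $\rho := \sr{C_q}$ is therefore a simple positive eigenvalue of $C_q$ with $\rho \ge |\lambda|$ for every $\lambda \in \sig{C_q}$. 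Since $q(0) = -c_m \ne 0$ we have $\rho \ne 0$, so $\rho$ is also a simple eigenvalue of $C$; and because $\rho > 0 = |0|$, the bound $\rho \ge |\lambda|$ persists for every $\lambda \in \sig{C}$. Hence $C$, and therefore $p$, is weakly spectrally Perron.

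The one step I expect to require care is the passage through reducibility: when $m < n$ the matrix $C$ is itself reducible, so Perron--Frobenius cannot be applied to $C$ directly, and it is the factorization $p = t^{n-m}q$ that simultaneously discards the inert zero eigenvalues and isolates the irreducible block $C_q$. Finally, I would record --- with an eye toward the Ostrovsky-type statement --- the sharper combinatorial fact underlying the gcd condition: the cycle-lengths of $\dg{C_q}$ are precisely $\{k : 1 \le k \le m,~c_k \ne 0\}$, because every cycle of $\dg{C_q}$ must visit vertex $m$ and then return to it along the unique directed path, hence has length $k$ for some such $k$; the gcd of this set equals $d$. Consequently $d = 1$ makes $C_q$ primitive, hence spectrally Perron, so in that case $p$ is in fact spectrally Perron, and the genuinely ``weak'' instances are exactly those with $d \ge 2$.
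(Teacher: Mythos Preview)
Your proof is correct and follows essentially the same approach as the paper: both handle necessity by contraposition (all $c_k = 0$ forces $p(t)=t^n$), and both prove sufficiency by stripping off the trailing zero coefficients and applying Perron--Frobenius to the irreducible companion matrix of the degree-$m$ factor. The paper reaches this submatrix by exhibiting $C$ as block upper triangular with your $C_q$ (denoted $\hat{C}$) in the lower-right corner, while you reach it via the factorization $p = t^{n-m}q$; these are two views of the same reduction, and your extra remarks on cycle-lengths correctly anticipate the paper's proof of the Ostrovsky theorem.
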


\begin{proof}
Denote by $J_n (\lambda)$ the $n$-by-$n$ Jordan block with eigenvalue $\lambda$. If $d \neq 0$, then at least one of the elements of $\mathcal{I}$ is positive, i.e., at least one of the elements of $\mathcal{C}$ is positive. If $\ell$ is the largest index such that $c_\ell > 0$, then 
\begin{equation}
C = J_{n-\ell} (0) \oplus \hat{C} = 
\kbordermatrix{
 & & & n - \ell + 1 & n - \ell + 2 & \cdots & n \\
 & J_{n - \ell}(0) & \vrule &  \\
 \cline{2-7}
n - \ell + 1 & & \vrule & & 1 \\
\vdots & & \vrule & & & \ddots \\
n-1 & & \vrule & & & & 1 \\
n & & \vrule & c_\ell & \ast & \cdots & \ast
 }. \label{compan}
\end{equation} 
The result now follows by applying the Perron-Frobenius theorem for irreducible nonnegative matrices \cite[Theorem 8.4.4]{hj2013} to the matrix $\hat{C}$.

For the converse, we proceed by the contrapositive: to that end, if $d = 0$, then $C = J_n (0)$ and $p$ is clearly not weakly spectrally Perron.
\end{proof}

The following result, attributed to Ostrovsky, provides a sufficient condition that ensures that $p$ is spectrally Perron; we give a proof via nonnegative matrix theory and show that the sufficient condition is also necessary. 

\begin{thm}[Ostrovosky]
\label{thm:ost}
The polynomial $p$ is spectrally Perron if and only if $d = 1$.
\end{thm}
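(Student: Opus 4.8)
The plan is to imitate the proof of Theorem~\ref{thm:cauchy}, but to replace the invocation of Perron--Frobenius for irreducible matrices with the finer dichotomy between the primitive and imprimitive cases. First I would dispose of the degenerate case $d = 0$: then every $c_k$ vanishes, so $p(t) = t^n$ and $C = J_n(0)$, which is not spectrally Perron (by Theorem~\ref{thm:cauchy} it is not even weakly spectrally Perron). So assume $d \ge 1$, let $\ell$ be the largest index with $c_\ell > 0$, and write $C = J_{n-\ell}(0) \oplus \hat{C}$ exactly as in~\eqref{compan}; here $\hat{C}$ is $\ell$-by-$\ell$, nonnegative, and irreducible, since its lower-left entry is $c_\ell \ne 0$.

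The heart of the argument is a combinatorial computation identifying the cycle-lengths of $\dg{\hat{C}}$. In $\dg{\hat{C}}$ every vertex but the last has out-degree one (the arc $i \to i+1$ coming from the identity block), so each cycle passes through the last vertex exactly once and is completely determined by the arc leaving it; that arc points to the vertex in position $n - k + 1$ precisely when $c_k \ne 0$ (necessarily $k \le \ell$), and closing it along the forced path produces a cycle of length $k$. Hence the cycle-lengths of $\dg{\hat{C}}$ are exactly $\{\, k : c_k \ne 0 \,\}$, whose greatest common divisor is $\gcd(i_1,\dots,i_n) = d$. Consequently $\hat{C}$ is primitive if and only if $d = 1$.

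Now I would treat the two directions. If $d = 1$, then $\hat{C}$ is primitive and hence spectrally Perron by the cited results \cite{h1981,jt2004,n2006}: it has a simple eigenvalue $\rho > 0$ with $\rho > |\lambda|$ for every other $\lambda \in \sig{\hat{C}}$. Since $\sig{C}$ consists of $\sig{\hat{C}}$ together with zeros (none if $\ell = n$) and $\rho > 0$, the eigenvalue $\rho$ is still simple in $\sig{C}$ and still strictly dominates it; thus $C$, and therefore $p$, is spectrally Perron. For the converse I argue by contrapositive with $d \ge 2$: then $\hat{C}$ is irreducible but imprimitive with index of imprimitivity $h = d$, so by the Perron--Frobenius theorem \cite[Theorem 8.4.4]{hj2013} it has $h \ge 2$ eigenvalues of maximum modulus $\rho$; in particular $\rho e^{2\pi i / h}$ is an eigenvalue of $\hat{C}$, hence of $C$, distinct from $\rho$ but of the same modulus, so no positive eigenvalue of $C$ strictly dominates $\sig{C}$ and $p$ is not spectrally Perron.

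The step I expect to demand the most care is the combinatorial identification of the cycle-lengths of $\dg{\hat{C}}$; the remainder is a routine application of Perron--Frobenius theory, together with the harmless observation that adjoining the nilpotent block $J_{n-\ell}(0)$ contributes only zero eigenvalues and therefore changes neither the simplicity nor the strict dominance of $\rho$.
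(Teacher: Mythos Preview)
Your proof is correct and follows essentially the same route as the paper: reduce to the irreducible block $\hat{C}$, identify the greatest common divisor of the cycle-lengths of $\dg{\hat{C}}$ with $d$, and then invoke the primitive/imprimitive dichotomy from Perron--Frobenius theory for both directions. Your explicit determination that the cycle-lengths are exactly $\{k : c_k \neq 0\}$ is in fact more detailed than the paper's argument, which simply asserts that the gcd of the cycle-lengths equals $d$, and your uniform treatment via $\hat{C}$ collapses the paper's case split on whether $c_n = 0$.
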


\begin{proof}
If $d = 1$, then at least one of the elements of $\mathcal{C}$ is positive. We distinguish the following cases:
\begin{enumerate}
[label=\roman*)]
\item $c_n \neq 0$. In this case $C$ is irreducible and since $d = 1$, note that at least two elements of $\mathcal{C}$ are positive (otherwise $d=n$). Moreover, $C$ is primitive since the greatest common divisor of all cycle-lengths of $\dg{C}$ is one. Hence, $p$ is spectrally Perron.  
\item $c_n = 0$. In this case, $C$ is of the form \eqref{compan} and the result follows by applying the exact argument in the previous case to the irreducible matrix $\hat{C}$.  
\end{enumerate}  

To prove necessity, we proceed via the contrapositive; to that end, and without losing generality, assume that $d > 1$ (the case $d=0$ was handled in the proof of \hyperref[thm:cauchy]{Cauchy's Theorem}). We distinguish the following cases:
\begin{enumerate}
[label=\roman*)]
\item $c_n \neq 0$. In this case, $C$ is irreducible and $\rho = \sr{C} \in \sig{C}$, but $C$ has $d$ eigenvalues of modulus $\rho$ (see, e.g., \cite[Corollary 8.4.6(c)]{hj2013}). Thus, $p$ is weakly spectrally Perron but not spectrally Perron. 
\item $c_n = 0$. In this case, $C$ is of the form \eqref{compan} and the result follows by applying the exact argument in the previous case to the irreducible matrix $\hat{C}$.
\end{enumerate}  
From the above, $p$ is spectrally Perron if and only if $d = 1$.
\end{proof}

\section{Implications for Further Research}

A real matrix $A$ is called \emph{eventually nonnegative (postive)} if there is a positive integer $k$ such that $A^k$ is nonnegative (respectively, positive). If $p$ is a monic polynomial and $C_p$ is eventually nonnegative, then $p$ is weakly spectrally Perron \cite{f1978}; if $C_p$ is eventually positive, then $p$ is spectrally Perron \cite{h1981, jt2004,n2006}. 

However, as the following example indicates, the companion matrix of a spectrally Perron polynomial need not be eventually nonnegative. 

\begin{ex}
If $p(t) = t^3 - 2t^2 - t + 2$, then
\[ C = C_p = 
\begin{bmatrix}
0 & 1 & 0 \\
0 & 0 & 1 \\
-2 & 1 & 2
\end{bmatrix}, \]
$\sig{C} = \{ 2,1, -1\}$, but the first column of $C^k$ is negative for every $k \geq 3$. 
\end{ex}

The previous example leads to the following problem.

\begin{prob}
Find necessary and sufficient conditions on the coefficients of a monic polynomial $p$ such that $p$ is (weakly) spectrally Perron.
\end{prob}


\bibliography{master}
\bibliographystyle{abbrv}

\end{document}